\newtheorem{theorem}{Theorem}[section]
\newtheorem{proposition}{Proposition}[section]
\newtheorem{lemma}{Lemma}[section]
\newtheorem{remark}{Remark}[section]
\newtheorem*{proposition*}{Proposition}
\newtheorem{defi}[theorem]{Definition}
\newcommand{\p}{\mathbb{P}}
\newcommand{\e}{\mathbb{E}}
\newcommand{\pK}{\mathbb{P}^{(K)}}
\newcommand{\eK}{\mathbb{E}^{(K)}}
\newcommand{\reals}{\mathbb{R}}
\newcommand{\ind}{\mathbf{1}}
\newcommand{\ruintime}{\sigma^\pi}
\newcommand{\Hq}{H^{(q)}_K}
\newcommand{\Hqprime}{H^{(q)\prime}_K}
\begin{document}

\title[A control problem with linearly bounded control rates]{A stochastic control problem with linearly bounded control rates in a Brownian model}

\author[]{Jean-Fran\c{c}ois Renaud}
\address{D\'epartement de math\'ematiques, Universit\'e du Qu\'ebec \`a Montr\'eal (UQAM), 201 av.\ Pr\'esident-Kennedy, Montr\'eal (Qu\'ebec) H2X 3Y7, Canada}
\email{renaud.jf@uqam.ca}

\author[]{Clarence Simard}
\address{D\'epartement de math\'ematiques, Universit\'e du Qu\'ebec \`a Montr\'eal (UQAM), 201 av.\ Pr\'esident-Kennedy, Montr\'eal (Qu\'ebec) H2X 3Y7, Canada}
\email{simard.clarence@uqam.ca}


\date{\today}

\keywords{Stochastic control, linear strategies, dividend payments, Brownian motion, Ornstein-Uhlenbeck process.}

\thanks{Funding in support of this work was provided by the Natural Sciences and Engineering Research Council of Canada (NSERC)}

\begin{abstract}
Aiming for more realistic optimal dividend policies, we consider a stochastic control problem with linearly bounded control rates using a performance function given by the expected present value of dividend payments made up to ruin. In a Brownian model, we prove the optimality of a member of a new family of control strategies called delayed linear control strategies, for which the controlled process is a refracted diffusion process. For some parameters specifications, we retrieve the strategy initially proposed in \cite{avanzi-wong_2012} to regularize dividend payments, which is more consistent with actual practice.
\end{abstract}

\maketitle

%

\section{Introduction and main result}

In this paper, we consider an optimal stochastic control problem with absolutely continuous and linearly bounded control strategies in a Brownian model. Our problem is a version of de Finetti's dividend problem in which payment/control strategies can be at most a (fixed) fraction of the current wealth/state. For this problem, an optimal strategy is formed by a delayed linear control strategy. Loosely speaking, for such a strategy, no dividends are paid below a barrier and level-dependent dividend payments are made when the process is above that barrier, allowing for more regular dividend payments over time. As the underlying state process is a Brownian motion, in order to solve this stochastic control problem, we study a specific \textit{refracted} diffusion process, i.e., a dynamic alternating between a Brownian motion with drift and an Ornstein-Uhlenbeck process.

\subsection{Model and problem formulation}

On a filtered probability space $\left( \Omega, \mathcal{F}, \left\lbrace \mathcal{F}_t, t \geq 0\right\rbrace, \p \right)$, let $X=\left\lbrace X_t , t \geq 0 \right\rbrace$ be a Brownian motion with drift, i.e.
\begin{equation}\label{eq:BM}
\mathrm{d}X_t = \mu \mathrm{d}t + \sigma \mathrm{d}B_t ,
\end{equation}
where $\mu \in \reals$ and $\sigma > 0$, and where $B=\left\lbrace B_t , t \geq 0 \right\rbrace$ is a standard Brownian motion. Using the language of ruin theory, $X$ is the (uncontrolled) surplus process.

A dividend strategy $\pi$ is represented by a non-decreasing, left-continuous and adapted stochastic process $L^\pi = \left\lbrace L^\pi_t , t \geq 0 \right\rbrace$, where $L^\pi_t$ represents the cumulative amount of dividends paid up to time $t$ under this strategy. For a given strategy $\pi$, the corresponding controlled surplus process $U^\pi = \left\lbrace U^\pi_t , t \geq 0 \right\rbrace$ is defined by $U^\pi_t = X_t - L^\pi_t$.

While de Finetti's classical dividend problem is a singular stochastic control problem, we are interested in an adaptation where the admissible strategies are absolutely continuous with a linearly bounded rate. 
\begin{defi}[Admissible strategies]
Fix a constant $K > 0$. A strategy $\pi$ is said to be admissible if it is absolutely continuous and linearly bounded, i.e., $\mathrm{d}L_t^\pi = \ell^\pi_t \mathrm{d}t$, with $0 \leq \ell^\pi_t \leq K U^\pi_t$, for all $t>0$. The set of admissible dividend strategies will be denoted by $\Pi^K$.
\end{defi}
Thus, for an admissible strategy $\pi$, we have
$$
\mathrm{d}U^\pi_t = \mathrm{d}X_t - \ell^\pi_t \mathrm{d}t = \left( \mu - \ell^\pi_t \right) \mathrm{d}t + \sigma \mathrm{d}B_t .
$$

For a given discounting rate $q \geq 0$, the value/performance function associated to an admissible strategy $\pi$ and initial value $U_0=X_0=x$ is given by
$$
v_\pi (x) = \e_x \left[ \int_0^{\ruintime} \mathrm{e}^{-q t} \ell^\pi_t \mathrm{d}t \right]
$$
where $\ruintime=\inf \left\lbrace t > 0 \colon U_t^\pi < 0 \right\rbrace$. Note that $v_\pi(x)=0$ for all $x \leq 0$. For notational simplicity, but without loss of generality, we have chosen the ruin level to be $0$, as it is often the case in ruin theory when dealing with space-homogeneous surplus processes.

The goal of this stochastic control problem is to find the optimal value function $v_\ast$ defined by
$$
v_\ast (x) = \sup_{\pi \in \Pi^K} v_\pi (x)
$$
and, if it exists, an optimal strategy $\pi_\ast \in \Pi^K$ such that
$$
v_{\pi_\ast} (x) = v_\ast (x) ,
$$
for all $x>0$.

\subsection{Main result}

As alluded to above, the optimal control strategy is of bang-bang type, i.e., dividends are either paid out at the maximal surplus-dependent rate or nothing is paid, depending on the value of the controlled process. Mathematically, let us define the following \textit{refracted} diffusion process: for fixed $b, K \geq 0$, let $U^b=\left\lbrace U^b_t , t \geq 0 \right\rbrace$ be given by
\begin{equation}\label{eq:main-sde}
\mathrm{d}U^b_t = \left( \mu - K U^b_t \ind_{\{U^b_t > b\}} \right) \mathrm{d}t + \sigma \mathrm{d}B_t .
\end{equation}
We prove the existence of a strong solution to this stochastic differential equation in Appendix~\ref{an:existence_proof}. This process is the controlled process associated to the (admissible) control strategy $\ell^b_t = K U^b_t \ind_{\{U^b_t > b\}}$, which consists of paying out at the maximal surplus-dependent rate $K$ when the controlled process is above level $b$ and pay nothing below $b$. This control strategy will be denoted by $\pi_b$ and called a delayed linear control strategy at level $b$ with rate $K$, when $b>0$, and simply a linear control strategy with rate $K$, when $b=0$. In both cases, it is an admissible strategy, i.e., $\pi_b \in \Pi^K$ for any $b \geq 0$.

Define
\begin{equation}\label{eq:Delta}
\Delta = - \Hq(0)/\Hqprime(0) ,
\end{equation}
where the function $\Hq$ is defined in~\eqref{eq:H-scale-function}. Note that $\Delta$ depends on both the control problem parameters $q$ and $K$ and the model parameters $\mu$ and $\sigma$. Define also $b^\ast$ as the (unique) root of Equation~\eqref{eq:eqn-optimal-barrier}; see below.

Here is an explicit solution to the control problem:
\begin{theorem}\label{T:main}
Fix $q > 0$ and $K > 0$. The following hold:
\begin{itemize}
\item[(i)] If $\mu K/q^2 \leq \Delta$, then the linear control strategy with rate $K$ is optimal.
\item[(ii)] If $\mu K/q^2 > \Delta$, then the delayed linear control strategy at level $b^\ast>0$ with rate $K$ is optimal.
\end{itemize}
\end{theorem}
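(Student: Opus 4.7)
The plan is to proceed by the standard guess-and-verify method. First I would write down the Hamilton--Jacobi--Bellman (HJB) equation associated with the control problem. Since $\ell \mapsto \ell(1-v'(x))$ is linear in $\ell$ on $[0,Kx]$, the HJB equation
$$\frac{\sigma^2}{2}v''(x) + \mu v'(x) - qv(x) + \sup_{0\le \ell \le Kx}\ell\bigl(1-v'(x)\bigr) = 0$$
predicts an optimal feedback of bang-bang type: $\ell^\ast = Kx$ wherever $v'(x) < 1$ and $\ell^\ast = 0$ wherever $v'(x) \ge 1$. This motivates the candidate family $\{\pi_b\}_{b\ge 0}$ stated in the theorem.

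Second, I would compute $v_b := v_{\pi_b}$ explicitly by solving the two linear ODEs it satisfies: on $(0,b)$ one has the pure Brownian-with-drift generator (constant-coefficient exponential solutions), and on $(b,\infty)$ one has the Ornstein--Uhlenbeck generator with source term $Kx$, whose homogeneous solutions can be expressed in terms of $\Hq$. The boundary conditions are $v_b(0)=0$, $C^1$-pasting at $b$, and an appropriate growth condition at infinity. Then I would optimize in $b$ by the smooth-pasting (super-contact) principle, imposing $C^2$ at $b$; this yields the equation~\eqref{eq:eqn-optimal-barrier} for $b^\ast$. A direct computation should show that the existence of a positive root is equivalent to $\mu K/q^2 > \Delta$, with $\Delta$ in~\eqref{eq:Delta} arising precisely by evaluating the smooth-pasting condition at $b=0^+$; otherwise $b=0$ (the pure linear strategy) is the best element of the family.

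Third, I would conclude via a verification lemma. Writing $V := v_{b^\ast}$ in case (ii) and $V := v_0$ in case (i), I would check that $V \in C^1(\reals_+)\cap C^2(\reals_+\setminus\{b^\ast\})$, that $V$ is non-decreasing with suitable growth at infinity, and that $V$ satisfies the HJB inequality
$$\frac{\sigma^2}{2}V''(x) + \mu V'(x) - qV(x) + \sup_{0\le \ell \le Kx}\ell\bigl(1-V'(x)\bigr) \le 0$$
on $(0,\infty)\setminus\{b^\ast\}$. An application of It\^o--Tanaka's formula (to accommodate the possibly missing second derivative at $b^\ast$) combined with optional stopping at $\ruintime$ then gives $V(x)\ge v_\pi(x)$ for every $\pi\in\Pi^K$, with equality achieved by $\pi_{b^\ast}$ (resp.\ $\pi_0$), proving optimality.

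The main obstacle will be analyzing the smooth-pasting equation together with the equivalent sign condition $V'(x)\le 1$ on $(0,b^\ast]$ and $V'(x)\ge 1$ on $[b^\ast,\infty)$, since this is what validates the bang-bang identification inside the HJB inequality and is the heart of the verification step. Both the characterization of $b^\ast$ and this monotonicity check reduce to delicate properties (sign, monotonicity, limits at $0$ and $\infty$) of a quotient built from $\Hq$ and its derivatives, essentially a confluent hypergeometric object, and this is where the bulk of the technical work will live. A secondary issue will be controlling the growth of $V$ and of $\mathrm{e}^{-qt}\ell^\pi_t$ in order to discharge the optional stopping step uniformly over $\pi\in\Pi^K$, despite the linearly growing drift of the OU regime.
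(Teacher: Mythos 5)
Your proposal follows essentially the same route as the paper: the HJB equation with bang-bang feedback, explicit computation of $v_b$ (the paper obtains it probabilistically via the strong Markov property and approximating strategies rather than by solving the ODEs directly, but the resulting formula and the Appendix-B ODEs are the same), smooth fit at $b^\ast$ with the threshold $\Delta$ arising from evaluating the condition at $b=0^+$, and verification via the sign of $V'-1$ on either side of $b^\ast$. One slip to fix: in your final paragraph the sign conditions are reversed --- the verification requires $V'(x)\ge 1$ on $(0,b^\ast]$ (where nothing is paid) and $V'(x)\le 1$ on $[b^\ast,\infty)$ (where the maximal rate is paid), which the paper settles by noting that $W^{(q)\prime}$ is decreasing on $(0,c^\ast)$ with $b^\ast<c^\ast$, and that $\Hq$ is convex.
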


\subsection{Related literature}\label{sec:related_lit}

If a delayed linear control strategy at level $b>0$ with rate $K$ is implemented, then dividends are paid continuously only when the process is above level $b$, according to an Ornstein-Uhlenbeck process with mean-reverting level $\mu/K$. In other words, when $b>0$, we have to wait for sufficient capital before paying out dividends according to this linear strategy. If the linear strategy with rate $K$ is implemented, i.e., when $b=0$, then dividends are paid without interruption up to the time of ruin, according to an Ornstein-Uhlenbeck process with mean-reverting level $\mu/K$. This last strategy is the one initally proposed in \cite{avanzi-wong_2012} to regularize dividend payments; it is also studied in \cite{albrecher-cani_2017} for a compound Poisson process with drift. In Section~6 of \cite{avanzi-wong_2012}, the authors hinted at delayed linear control strategies. In some sense, delaying dividend payments brings \textit{additional safety} by avoiding payments when the process is close to the ruin level. Theorem~\ref{T:main} tells us that, for our control problem, either the optimal strategy is a linear control strategy, as studied in \cite{avanzi-wong_2012}, or it is a member of the family of delayed linear control strategies. In the latter case, Proposition~\ref{P:root} tells us how to find the optimal level $b^\ast$.

Note that our problem has similarities with another variation on de Finetti's optimal dividends problem, in which admissible strategies have a bounded rate, i.e., $0 \leq \ell^\pi_t \leq \widetilde{K}$, for all $t>0$, for a fixed constant $\widetilde{K}$; see, e.g., \cite{jeanblanc-shiryaev_1995}. However, for that problem, the optimal control process is a two-valued Brownian motion with drift, also called a refracted Brownian motion with drift. In the classical de Finetti's control problem, admissible strategies are not necessarily absolutely continuous nor bounded. For the classical problem, the optimal control process is a reflected Brownian motion with drift and the optimal reflection level is known to be
\begin{equation}\label{eq:classical-optimal-level}
c^\ast = \frac{\sigma^2}{\sqrt{\mu^2 + 2q\sigma^2}} \ln \left( \frac{\mu + \sqrt{\mu^2 + 2q\sigma^2}}{-\mu + \sqrt{\mu^2 + 2q\sigma^2}} \right) .
\end{equation}
Intuitively, in our problem, if $K$ becomes large, then we should get closer to the classical problem.

The rest of the paper is organized as follows. In Section 2, we provide background material on Ornstein-Uhlenbeck diffusion processes, we compute the value function of an arbitrary (delayed) linear control strategy and then we find the (candidate) optimal barrier level $b^\ast$. In Section 3, we give a verification lemma for this control problem and prove Theorem~\ref{T:main}. Finally, Section~\ref{S:discussion} aims at giving a second look at the main results with a short discussion and numerical illustrations. More technical material is provided in the appendices, including a proof for the existence of a refracted Ornstein-Uhlenbeck diffusion process and ordinary differential equations related to the control problem.

\section{Delayed linear control strategies}

Before computing the value function of an arbitrary linear control strategy and then finding the optimal barrier level, let us recall some results about Ornstein-Uhlenbeck diffusion processes.

\subsection{Preliminaries on Ornstein-Uhlenbeck diffusion processes}

First, let us recall some results on first-passage problems for the Brownian motion with drift $X$, given in~\eqref{eq:BM}, and the Ornstein-Uhlenbeck diffusion process $U=\left\lbrace U_t , t \geq 0 \right\rbrace$ given by
\begin{equation}\label{eq:OU}
\mathrm{d}U_t = \left( \mu - K U_t \right) \mathrm{d}t + \sigma \mathrm{d}B_t ,
\end{equation}
where $K \geq 0$. Clearly, if $K=0$, then $U=X$. In what follows, we will use the following notation: the law of $U$ when starting from $U_0 = x$ is denoted by $\pK_x$ and the corresponding expected value operator by $\eK_x$. Define the following first-passage stopping times: for $a \in \reals$, let $\tau_a = \inf \left\lbrace t \colon U_t = a \right\rbrace$.

In what follows, we assume $q>0$. First, we consider the case $K=0$, i.e.\ the case when $U=X$ is a Brownian motion with drift. It is known (see, e.g., \cite{kyprianou_2014}) that, for $0 \leq x \leq a$,
\begin{equation}\label{eq:two-sided-BM}
\e^{(0)}_x \left[ \mathrm{e}^{-q \tau_a} \ind_{\{\tau_a < \tau_0\}} \right] = \frac{W^{(q)}(x)}{W^{(q)}(a)} ,
\end{equation}
where, for $x>0$,
\begin{equation}\label{eq:q-scale-function}
W^{(q)}(x)= \frac{2}{\sqrt{\mu^2+2q\sigma^2}} \mathrm{e}^{-(\mu/\sigma^2) x} \sinh \left( (1/\sigma^2) \sqrt{\mu^2+2q\sigma^2} x \right)
\end{equation}
and, for $x \leq 0$, $W^{(q)}(x)=0$. It is known that $W^{(q) \prime}$ is convex.

Now, we consider the case $K>0$, i.e., the case when $U$ is an Ornstein-Uhlenbeck diffusion process. It is known (see, e.g., \cite{alili-et-al_2005} and references therein) that $U$ is a recurrent diffusion, so $\tau_a$ is finite, almost surely. If $\mu=0$ and $\sigma=1$, then, for $x<a$,
$$
\eK_x \left[ \mathrm{e}^{-q \tau_a} \ind_{\{\tau_a < \infty\}} \right] = \frac{\mathrm{e}^{K x^2/2} \mathrm{D}_{-q/K} \left(-x \sqrt{2K} \right)}{\mathrm{e}^{K a^2/2} \mathrm{D}_{-q/K} \left(-a \sqrt{2K} \right)} ,
$$
where $\mathrm{D}_{-\lambda}$ is the parabolic cylinder function defined, for $x \in \reals$, by
$$
\mathrm{D}_{-\lambda}(x) = \frac{1}{\Gamma(\lambda)} \mathrm{e}^{-x^2/4} \int_0^\infty t^{\lambda-1} \mathrm{e}^{-xt-t^2/2} \mathrm{d}t .
$$
We see that $x \mapsto \mathrm{e}^{K x^2/2} \mathrm{D}_{-q/K} \left(-x \sqrt{2K} \right)$ is increasing and convex.

Consequently, for $\mu \in \reals$ and $\sigma>0$, we can deduce that, for $x \geq a$,
\begin{equation}\label{eq:first-passage-OU}
\eK_x \left[ \mathrm{e}^{-q \tau_a} \ind_{\{\tau_a < \infty\}} \right] = \frac{\Hq \left( x \right)}{\Hq \left( a \right)} ,
%
\end{equation}
where
\begin{equation}\label{eq:H-scale-function}
\Hq (x) := \mathrm{e}^{K \left(x-\mu/K \right)^2/2\sigma^2} \mathrm{D}_{-q/K} \left(\left(\frac{x-\mu/K}{\sigma} \right) \sqrt{2K} \right) .
\end{equation}
It is easy to verify that $\Hq$ is convex.
%

For more analytical properties of $W^{(q)}$ and $\Hq$, see Appendix~\ref{an:odes}.

\subsection{Value function and optimal barrier level}

Fix $q > 0$ and $K>0$. Recall that, for an arbitrary $b \geq 0$, the process $U^b=\left\lbrace U^b_t , t \geq 0 \right\rbrace$ is the controlled process associated to the delayed linear strategy $\pi_b \in \Pi^K$. This refracted diffusion process is the solution to the SDE in~\eqref{eq:main-sde}, which we recall here:
$$
\mathrm{d}U^b_t = \left( \mu - K U^b_t \ind_{\{U^b_t > b\}} \right) \mathrm{d}t + \sigma \mathrm{d}B_t .
$$
Denote the ruin time of this process by $\sigma^b=\inf \left\lbrace t > 0 \colon U_t^b < 0 \right\rbrace$. Note that, if $b=0$, then $U^0=\left\lbrace U^0_t , 0 \leq t \leq \sigma^0 \right\rbrace$ has the same law as $U=\left\lbrace U_t , 0 \leq t \leq \tau_0 \right\rbrace$, where $U$ is the Ornstein-Uhlenbeck process defined in~\eqref{eq:OU}.

%

Denote the value function associated to $\pi_b$ by
$$
v_b (x) = \e_x \left[ \int_0^{\sigma^b} \mathrm{e}^{-q t} \left( K U^b_t \ind_{\{U^b_t > b\}} \right) \mathrm{d}t \right] , \quad x \geq 0 .
$$

%

\begin{proposition}\label{P:value-linear}
For $b \geq 0$, we have
$$
v_b (x) =
\begin{cases}
\frac{K}{q+K} C_b W^{(q)}(x) & \text{if $0 \leq x \leq b$,}\\
\frac{K}{q+K} \left[ x + \frac{\mu}{q} + D_b \Hq(x) \right] & \text{if $x > b$,}
\end{cases}
$$
where
$$
C_b = \frac{\Hq(b) - \left( b + \frac{\mu}{q} \right) \Hqprime(b)}{W^{(q) \prime} (b) \Hq(b) - W^{(q)}(b) \Hqprime(b)} \qquad \text{and} \qquad D_b = \frac{W^{(q)}(b) - \left( b + \frac{\mu}{q} \right) W^{(q) \prime} (b)}{W^{(q) \prime} (b) \Hq(b) - W^{(q)}(b) \Hqprime(b)} .
$$
\end{proposition}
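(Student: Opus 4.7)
The plan is to split the analysis at the refraction level $b$, parameterise everything by the single unknown $\alpha:=v_b(b)$, and pin down $\alpha$ at the end via $C^1$ smooth fit at $b$. On $[0,b]$ no dividends accumulate along the path of $U^b$, so applying the strong Markov property at $\tau_b\wedge\tau_0$ together with $v_b(0)=0$ and the two-sided exit identity \eqref{eq:two-sided-BM} immediately gives
\[
v_b(x) = \alpha\,\frac{W^{(q)}(x)}{W^{(q)}(b)}, \qquad 0\le x\le b.
\]

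For $x>b$, until the first downcrossing $\tau_b$ the process $U^b$ coincides in law with the Ornstein--Uhlenbeck process $U$ under $\mathbb{P}^{(K)}_x$. The strong Markov property at $\tau_b$ yields
\[
v_b(x) = \mathbb{E}^{(K)}_x\!\left[\int_0^{\tau_b} e^{-qt} K U_t\,dt\right] + \alpha\,\mathbb{E}^{(K)}_x\!\left[e^{-q\tau_b}\right],
\]
where the second expectation equals $H^{(q)}_K(x)/H^{(q)}_K(b)$ by \eqref{eq:first-passage-OU}. For the remaining integral I plan a short Itô computation: using the OU dynamics one checks that
\[
d\bigl[e^{-qt}(U_t+\mu/q)\bigr] = -(q+K)\,e^{-qt}\,U_t\,dt + \sigma\,e^{-qt}\,dB_t,
\]
so integrating up to $\tau_b\wedge T$, taking expectation, and sending $T\to\infty$ (justified by standard OU moment bounds together with the exponential discount) produces
\[
\mathbb{E}^{(K)}_x\!\left[\int_0^{\tau_b} e^{-qt} K U_t\,dt\right] = \frac{K}{q+K}\!\left[(x+\mu/q) - \frac{H^{(q)}_K(x)}{H^{(q)}_K(b)}(b+\mu/q)\right].
\]
Substituting back, $v_b$ on $(b,\infty)$ is the particular solution $\frac{K}{q+K}(x+\mu/q)$ plus a multiple of $H^{(q)}_K(x)$, still parametrised by the single unknown $\alpha$.

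To determine $\alpha$ I would invoke $C^1$ smooth fit at $b$. Because the diffusion coefficient of $U^b$ is constant and only its drift jumps at $b$, $U^b$ is regular for $b$ from both sides and no local time accumulates there, so $v_b\in C^1(0,\infty)$. Matching $v_b'(b^-)=v_b'(b^+)$ yields a single linear equation for $\alpha$; equivalently, writing value and derivative matching at $b$ as a $2\times 2$ linear system in the coefficients $C_b$ (multiplying $W^{(q)}$) and $D_b$ (multiplying $H^{(q)}_K$) and solving by Cramer's rule, with the Wronskian-type denominator $W^{(q)\prime}(b)H^{(q)}_K(b)-W^{(q)}(b)H^{(q)\prime}_K(b)$, delivers the stated formulas. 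The main technical obstacle I foresee is justifying this smooth-fit property at $b$ and the integrability needed for the optional-stopping step up to the a.s.\ finite but unbounded random time $\tau_b$; both are standard given the recurrence of $U$ and the exponential discount, but should be addressed carefully (for the edge case $b=0$ the smooth-fit step is simply replaced by the boundary condition $v_0(0)=0$, which degenerates the $2\times 2$ system into a single equation producing $D_0=-(\mu/q)/H^{(q)}_K(0)$ consistently with the general formula).
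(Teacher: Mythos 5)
Your proposal is correct in outline and reaches the stated formulas, but it pins down the one free constant $v_b(b)$ by a genuinely different mechanism than the paper. The treatment on $[0,b]$ and the strong Markov decomposition above $b$ coincide with the paper's (your Dynkin/It\^o computation of $\mathbb{E}^{(K)}_x[\int_0^{\tau_b}e^{-qt}KU_t\,\mathrm{d}t]$ replaces the paper's decomposition through $\mathbb{E}^{(K)}_x[\int_0^{\infty}e^{-qt}U_t\,\mathrm{d}t]=\tfrac{1}{q+K}(x+\mu/q)$, and both are routine and give the same expression). The divergence is in the last step: you invoke $C^1$ smooth fit at $b$ and solve the resulting $2\times 2$ system by Cramer's rule, whereas the paper determines $v_b(b)$ probabilistically, by sandwiching $\pi_b$ between explicit approximating strategies $\overline{\pi}_b^n$ and $\underline{\pi}_b^n$ (pay at rate $K$ from $b$ down to $b-1/n$, respectively wait until $b+1/n$), computing both value functions at $b$ in closed form, and letting $n\to\infty$. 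The paper's route deliberately avoids any a priori regularity claim --- smooth pasting is only \emph{verified a posteriori} in the Remark following the proposition --- while your route makes smooth fit the load-bearing step. That step is exactly what you flag as the ``main technical obstacle,'' and as written it is asserted rather than proved: ``no local time accumulates at $b$'' is a heuristic, not an argument. It can be made rigorous (e.g., the scale function of the refracted diffusion has an absolutely continuous, hence continuous, derivative $s'$, and $\mathrm{d}v_b/\mathrm{d}s$ is continuous across $b$ by the general resolvent theory of regular one-dimensional diffusions; alternatively via $W^{2,p}_{\mathrm{loc}}$ elliptic regularity for the ODE with bounded measurable drift), but one of these justifications must actually be supplied for your proof to be complete. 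Your algebra is otherwise right: value and derivative matching do yield the stated $C_b$ and $D_b$ with the Wronskian denominator, and the $b=0$ boundary condition gives $D_0=-(\mu/q)/H^{(q)}_K(0)$ consistently with the general formula.
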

\begin{proof}
Fix $b>0$. First assume that $0<x\leq b$. By the strong Markov property, we have
$$
v_b(x) = \e^{(0)}_x \left[ \mathrm{e}^{-q \tau_b} \ind_{\{\tau_b < \tau_0\}} \right] v_b(b) = \frac{W^{(q)}(x)}{W^{(q)}(b)} v_b(b) ,
$$
where we used the identity in~\eqref{eq:two-sided-BM}. Now, if we assume that $x > b$, then by the strong Markov property we have
$$
v_b(x) = K \eK_x \left[ \int_0^{\tau_b} \mathrm{e}^{-q t} U_t \mathrm{d}t \right] + \eK_x \left[ \mathrm{e}^{-q \tau_b} \ind_{\{\tau_b < \infty\}} \right] v_b(b) ,
$$
where, using the identity in~\eqref{eq:first-passage-OU} and using the strong Markov property one last time (as in \cite{avanzi-wong_2012}),
$$
\eK_x \left[ \int_0^{\tau_b} \mathrm{e}^{-q t} U_t \mathrm{d}t \right] = \eK_x \left[ \int_0^\infty \mathrm{e}^{-q t} U_t \mathrm{d}t \right] - \eK_x \left[ \mathrm{e}^{-q \tau_b} \ind_{\{\tau_b < \infty\}} \right] \eK_b \left[ \int_0^\infty \mathrm{e}^{-q t} U_t \mathrm{d}t \right] .
$$
It is well known that, for $t \geq 0$,
$$
\eK_x \left[ U_t \right] = x \mathrm{e}^{-Kt} + \frac{\mu}{K} \left(1 - \mathrm{e}^{-Kt} \right) ,
$$
so we have
$$
\eK_x \left[ \int_0^\infty \mathrm{e}^{-q t} U_t \mathrm{d}t \right] = \frac{1}{q+K} \left( x + \frac{\mu}{q} \right) .
$$

Consequently
$$
v_b(x) =
\begin{cases}
\frac{W^{(q)}(x)}{W^{(q)}(b)} v_b(b) & \text{if $0 \leq x \leq b$,}\\
\frac{K}{q+K} \left( x + \frac{\mu}{q} \right) + \left[ v_b (b) - \frac{K}{q+K} \left( b + \frac{\mu}{q} \right) \right] \frac{\Hq (x)}{\Hq (b)} & \text{if $x > b$.}
\end{cases}
$$

To compute $v_b(b)$, we will use approximations of the delayed linear strategy at level $b$. Fix $n \geq 1$ and let us implement the following strategy: when the (controlled) process reaches $b$, then dividends are paid continuously at rate $K$ until the controlled process goes back to $b-1/n$; dividend payments resume when the controlled process reaches $b$ again. Denote this strategy by $\overline{\pi}_b^n$ and its value function by $\overline{v}_b^n$. Clearly, $v_b(b) \leq \overline{v}_b^n (b)$ and thus $v_b(b) \leq \lim_{n \to \infty} \overline{v}_b^n(b)$.

As for $v_b$ above, we have the following decompositions:
$$
\overline{v}_b^n(b-1/n) = \e^{(0)}_{b-1/n} \left[ \mathrm{e}^{-q \tau_b} \ind_{\{\tau_b < \tau_0\}} \right] \overline{v}_b^n(b) ,
$$
and
$$
\overline{v}_b^n(b) = \eK_b \left[ \int_0^{\tau_{b-1/n}} \mathrm{e}^{-q t} K U_t \mathrm{d}t \right] + \eK_b \left[ \mathrm{e}^{-q \tau_{b-1/n}} \ind_{\{\tau_{b-1/n} < \infty\}} \right] \overline{v}_b^n(b-1/n) .
$$
Solving for $\overline{v}_b^n(b)$, we get
$$
\overline{v}_b^n(b) = \frac{\eK_b \left[ \int_0^{\tau_{b-1/n}} \mathrm{e}^{-q t} K U_t \mathrm{d}t \right]}{1 - \e^{(0)}_{b-1/n} \left[ \mathrm{e}^{-q \tau_b} \ind_{\{\tau_b < \tau_0\}} \right] \eK_b \left[ \mathrm{e}^{-q \tau_{b-1/n}} \ind_{\{\tau_{b-1/n} < \infty\}} \right]} .
$$
Since
\begin{align*}
\eK_b \left[ \int_0^{\tau_{b-1/n}} \mathrm{e}^{-q t} K U_t \mathrm{d}t \right] &= \frac{K}{q+K} \left( b + \frac{\mu}{q} \right) - \frac{K}{q+K} \left( (b-1/n) + \frac{\mu}{q} \right) \frac{\Hq (b)}{\Hq \left( b-1/n \right)} ,\\
\e^{(0)}_{b-1/n} \left[ \mathrm{e}^{-q \tau_b} \ind_{\{\tau_b < \tau_0\}} \right] &= \frac{W^{(q)}(b-1/n)}{W^{(q)}(b)} ,\\
\eK_b \left[ \mathrm{e}^{-q \tau_{b-1/n}} \ind_{\{\tau_{b-1/n} < \infty\}} \right] &= \frac{\Hq (b)}{\Hq \left( b-1/n \right)} ,
\end{align*}
dividing by $1/n$ and taking the limit when $n \to \infty$, we get
$$
n \eK_b \left[ \int_0^{\tau_{b-1/n}} \mathrm{e}^{-q t} K U_t \mathrm{d}t \right] \longrightarrow_{n \to \infty} \frac{K}{q+K} - \frac{K}{q+K} \left( b + \frac{\mu}{q} \right) \frac{\Hqprime (b)}{\Hq (b)}
$$
and
$$
n \left\lbrace 1 - \e^{(0)}_{b-1/n} \left[ \mathrm{e}^{-q \tau_b} \ind_{\{\tau_b < \tau_0\}} \right] \eK_b \left[ \mathrm{e}^{-q \tau_{b-1/n}} \ind_{\{\tau_{b-1/n} < \infty\}} \right] \right\rbrace \longrightarrow_{n \to \infty} \frac{W^{(q)\prime}(b)}{W^{(q)}(b)} - \frac{\Hqprime (b)}{\Hq (b)} .
$$

Putting the pieces together, we finally get
$$
\lim_{n \to \infty} \overline{v}_b^n(b) = \frac{\frac{K}{q+K} - \frac{K}{q+K} \left( b + \frac{\mu}{q} \right) \frac{\Hqprime \left( b \right)}{\Hq \left( b \right)}}{\frac{W^{(q)\prime}(b)}{W^{(q)}(b)} - \frac{\Hqprime \left( b \right)}{\Hq \left( b \right)}} .
$$

Similarly, for a fixed $n \geq 1$, we can implement the following strategy: wait until the (controlled) process reaches $b+1/n$ before paying dividends at rate $K$ and do so until the controlled process goes back to $b$; dividend payments resume when the controlled process reaches $b+1/n$ again. Denote this strategy by $\underline{\pi}_b^n$ and its value function by $\underline{v}_b^n$. Clearly, $v_b(b) \geq \underline{v}_b^n (b)$ and thus $v_b(b) \geq \lim_{n \to \infty} \underline{v}_b^n(b)$.

It easy to verify that $\lim_{n \to \infty} \underline{v}_b^n(b) = \lim_{n \to \infty} \overline{v}_b^n(b)$. The result follows after algebraic manipulations.
\end{proof}

\begin{remark}
Note that, if $b=0$, then the expression just obtained for $v_0$ is in principle the same as the one obtained in \cite{avanzi-wong_2012}.

Note also that, if $b>0$, then a direct computation shows that $v_b(b-)=v_b(b+)$ and $v^\prime_b(b-)=v^\prime_b(b+)$. In other words, the continuous and the smooth pasting conditions are verified.
\end{remark}

\begin{remark}
The function $x \mapsto W^{(q)}(x)/W^{(q) \prime}(b)$ plays a fundamental role in de Finetti's classical problem. It is the value function, when starting from level $x$, of a barrier control strategy at level $b$. As mentioned above, in that problem, the optimal barrier level is given by $b=c^\ast$ as defined in~\eqref{eq:classical-optimal-level}.
\end{remark}

We now want to find the best delayed linear strategy within the sub-class of delayed linear strategies $\left\lbrace \pi_b , b \geq 0 \right\rbrace$, i.e., we want to find $b^\ast$ such that $\pi_{b^\ast}$ outperforms any other $\pi_b$. For analytical reasons, namely to be able to apply Ito's formula, the optimal barrier level $b=b^\ast$ should be such that $v^{\prime \prime}_b(b-)=v^{\prime \prime}_b(b+)$. In this direction, using identities from Appendix~\ref{an:odes}, let us define our candidate for the optimal barrier level $b^\ast$: it is the value of $b$ such that
\begin{equation}\label{eq:eqn-optimal-barrier}
\frac{K}{q} = \frac{\frac{\Hq (b)}{\Hqprime (b)} - \frac{W^{(q)} (b)}{W^{(q) \prime} (b)}}{\frac{W^{(q)} (b)}{W^{(q) \prime} (b)} - \left(b + \frac{\mu}{q} \right)} .
\end{equation}

\begin{proposition}\label{P:root}
If $\mu K/q^2 > \Delta$, then there exists a unique solution $b^\ast \in (0,c^\ast)$ to Equation~\eqref{eq:eqn-optimal-barrier}.
\end{proposition}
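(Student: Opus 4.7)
The plan is to recast Equation~\eqref{eq:eqn-optimal-barrier} as $\phi(b)=0$, where
$$
\phi(b) := (q+K)\,\frac{W^{(q)}(b)}{W^{(q)\prime}(b)} - K\Bigl(b + \frac{\mu}{q}\Bigr) - q\,\frac{\Hq(b)}{\Hqprime(b)},
$$
and then to establish existence and uniqueness of a root in $(0,c^\ast)$ by showing that $\phi$ is continuous, strictly increasing on $[0,c^\ast]$, and satisfies $\phi(0)<0<\phi(c^\ast)$. The intermediate value theorem will then finish the argument.

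First I would evaluate $\phi(0)$: using $W^{(q)}(0)=0$ and the definition of $\Delta$, one obtains directly $\phi(0) = -K\mu/q + q\Delta = -q\bigl(\mu K/q^2-\Delta\bigr)$, which is strictly negative by hypothesis. Note that $\Hq>0$ together with $\Hqprime<0$ (the latter being a straightforward consequence of the representation of $\Hq$ given in Section~2.1, obtained by precomposing the increasing convex function just before \eqref{eq:H-scale-function} with a decreasing affine change of variables) imply $\Delta>0$, so the hypothesis $\mu K/q^2>\Delta$ actually forces $\mu>0$; I will use this implicitly below.

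Next I would establish strict monotonicity by differentiating:
$$
\phi'(b) = -(q+K)\,\frac{W^{(q)}(b)\,W^{(q)\prime\prime}(b)}{\bigl(W^{(q)\prime}(b)\bigr)^2} + q\,\frac{\Hq(b)\,H^{(q)\prime\prime}_K(b)}{\bigl(\Hqprime(b)\bigr)^2}.
$$
On $(0,c^\ast)$ the second summand is positive by (strict) convexity of $\Hq$. For the first summand, the classical analysis of the scale function shows that $W^{(q)\prime}$ is convex and attains its unique minimum at $c^\ast$ with $W^{(q)\prime}(c^\ast)>0$, so on $(0,c^\ast)$ one has $W^{(q)\prime\prime}<0$ while $W^{(q)},W^{(q)\prime}>0$. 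Hence the first summand is also positive, and $\phi$ is strictly increasing on $(0,c^\ast)$.

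Finally, for $\phi(c^\ast)>0$, I would invoke the ODEs of Appendix~\ref{an:odes}. Evaluating $\tfrac{\sigma^2}{2}W^{(q)\prime\prime}+\mu W^{(q)\prime}=qW^{(q)}$ at $c^\ast$ where $W^{(q)\prime\prime}(c^\ast)=0$ yields $W^{(q)}(c^\ast)/W^{(q)\prime}(c^\ast)=\mu/q$. Dividing $\tfrac{\sigma^2}{2}H^{(q)\prime\prime}_K+(\mu-Kx)\Hqprime=q\Hq$ by $\Hqprime(c^\ast)<0$ at $x=c^\ast$ and using $H^{(q)\prime\prime}_K(c^\ast)>0$ gives the strict inequality $q\Hq(c^\ast)/\Hqprime(c^\ast)<\mu-Kc^\ast$. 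Substituting both identities into $\phi$ produces $\phi(c^\ast) = \mu - K c^\ast - q\Hq(c^\ast)/\Hqprime(c^\ast)>0$. The most delicate aspect of the argument is the sign bookkeeping---chiefly that $\Hqprime<0$ everywhere and that $W^{(q)\prime\prime}<0$ on $(0,c^\ast)$---since each of these elementary facts underpins a crucial inequality above.
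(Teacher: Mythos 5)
Your proof is correct and follows essentially the same skeleton as the paper's: both reduce \eqref{eq:eqn-optimal-barrier} to the form \eqref{eq:optimal-barrier-2}, evaluate at $0$ and at $c^\ast$, and get monotonicity on $(0,c^\ast)$ from exactly the two sign facts $W^{(q)\prime\prime}<0$ on $(0,c^\ast)$ and $H_K^{(q)\prime\prime}>0$. Your packaging is cleaner in one respect: you fold everything into a single strictly increasing function $\phi$ and apply the intermediate value theorem once, whereas the paper keeps $h(b)$ and $q\Hq(b)/\Hqprime(b)$ separate and derives uniqueness from a mean value theorem contradiction ($h^\prime>q$ versus $\tfrac{\mathrm{d}}{\mathrm{d}b}\bigl(q\Hq/\Hqprime\bigr)<q$), which is the same derivative computation. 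The one substantive divergence is the endpoint inequality $\phi(c^\ast)>0$: the paper establishes the global strict inequality \eqref{eq:inequality}, $q\Hq(b)/\Hqprime(b)<\mu-Kb$ for all $b$, via parabolic cylinder function identities, while you only need it at $b=c^\ast$ and extract it from the homogeneous ODE satisfied by $\Hq$. Your route hinges on $H_K^{(q)\prime\prime}(c^\ast)>0$ \emph{strictly}, whereas the paper only asserts that $\Hq$ is convex; this is the one place where you should add a line. Strict convexity does hold: if $H_K^{(q)\prime\prime}(x_0)=0$ at some $x_0$, differentiating the ODE of Appendix~\ref{an:odes} gives $\tfrac{\sigma^2}{2}H_K^{(q)\prime\prime\prime}(x_0)=(q+K)\Hqprime(x_0)<0$, contradicting that $x_0$ would be an interior minimum of the nonnegative function $H_K^{(q)\prime\prime}$; equivalently, the paper's inequality \eqref{eq:inequality} is, through the ODE, exactly the statement $H_K^{(q)\prime\prime}>0$. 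The remaining ingredients you use --- $\Hq>0$ and $\Hqprime<0$ (hence $\Delta>0$ and $\mu>0$ under the hypothesis), and $W^{(q)}(c^\ast)/W^{(q)\prime}(c^\ast)=\mu/q$ --- match the paper's.
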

\begin{proof}
First, note that Equation~\eqref{eq:eqn-optimal-barrier} is equivalent to
\begin{equation}\label{eq:optimal-barrier-2}
\left(K+q \right) \left[ \frac{W^{(q)} (b)}{W^{(q) \prime} (b)} - \frac{\mu}{q} \right] + K \left( \frac{\mu}{K} - b \right) = q \frac{\Hq (b)}{\Hqprime (b)} .
\end{equation}
Set $h(b):= f(b)+g(b)$, with
\begin{align*}
f(b) &:= \left(K+q \right) \left[ \frac{W^{(q)} (b)}{W^{(q) \prime} (b)} - \frac{\mu}{q} \right] ,\\
g(b) &:= K \left( \frac{\mu}{K} - b \right) .
\end{align*}
Elementary algebraic manipulations lead to
\[
\frac{W^{(q)}(b)}{W^{ (q) \prime}(b)} = \frac{\sigma^2}{\sqrt{\mu^2+2q\sigma^2}\coth\left(b\frac{\sqrt{\mu^2+2q\sigma^2}}{\sigma^2} \right)-\mu} .
\]
From the properties of the hyperbolic cotangent function, we deduce that $W^{(q)}(b)/W^{(q) \prime} (b)$ is increasing on $[0,\infty)$. Note also that $W^{(q)}(c^\ast)/W^{(q) \prime} (c^\ast) = \mu/q$, where $c^\ast$ is given in~\eqref{eq:classical-optimal-level}. In other words, $f$ is an increasing function crossing zero at $b=c^\ast$, while $g$ is a decreasing function crossing zero at $b=\mu/K$. Finally, we see that $h(0)=-\mu K/q<0$.

On the other hand, we can verify that, for all $b \geq 0$,
\begin{equation}\label{eq:inequality}
q \frac{\Hq (b)}{\Hqprime (b)} < g(b) ,
\end{equation}
thanks to the following identities: $H_K^{(q)\prime} (x) = - \frac{q \sqrt{2K}}{K\sigma} H_{K}^{(q+K)} (x)$ and
$$
H_K^{(q)} (x) = \left( \frac{q}{K} + 1 \right) H_{K}^{(q+2K)} (x) + d(x) H_{K}^{(q+K)} (x) ,
$$
where $d(x)=\frac{\sqrt{2K}}{\sigma} \left(x- \frac{\mu}{K} \right)$. The second identity can be verified easily using integration by parts.

Under our assumptions, $h(0)<q \Hq (0)/\Hqprime (0)$. By the intermediate value theorem, since $f(c^\ast)=0$, together with the inequality in~\eqref{eq:inequality}, we can deduce that there exists a solution $b \in (0,c^\ast)$ to Equation~\eqref{eq:eqn-optimal-barrier}.

Assume there exists two solutions $0<b_1 < b_2<c^\ast$ to Equation~\eqref{eq:eqn-optimal-barrier}. Then, by the mean value theorem, there exists $a_h, a_H \in (b_1,b_2)$ such that
$$
h^\prime (a_h) = q \frac{\mathrm{d}}{\mathrm{d}b} \left(\frac{\Hq}{\Hqprime} \right) (b) \Biggr\vert_{b=a_H}
$$
which is a contradiction. Indeed, on one hand, we have
$$
h^\prime (b) = q - \left(K+q \right) \frac{W^{(q)} (b) W^{(q) \prime \prime} (b)}{\left( W^{(q) \prime} (b) \right)^2} > q
$$
since $W^{(q) \prime \prime} (b)<0$ on $(0,c^\ast)$, and on the other hand, we have
$$
\frac{\mathrm{d}}{\mathrm{d}b} \left(\frac{\Hq}{\Hqprime} \right) (b) = 1 - \frac{\Hq (b) H_K^{(q) \prime \prime} (b)}{\left( H_K^{(q) \prime}(b) \right)^2} < 1 ,
$$
for all $b$, since $\Hq$ is convex.
\end{proof}

\section{Verification Lemma and proof of Theorem~\ref{T:main}}

Here is the verification lemma of our stochastic control problem.

\begin{lemma}\label{verificationlemma}
Suppose that $\hat{\pi} \in \Pi^K$ is such that $v_{\hat{\pi}}$ is twice continuously differentiable and that, for all $x > 0$,
\begin{equation}\label{eq:HJB}
\left( \frac{\sigma^2}{2} \right) v_{\hat{\pi}}^{\prime \prime}(x) + \mu v_{\hat{\pi}}^{\prime}(x) - q v_{\hat{\pi}} (x) + \sup_{0 \leq u \leq Kx} \left[ u \left(1-v_{\hat{\pi}}^\prime(x) \right) \right] = 0 .
\end{equation}
In this case, $\hat{\pi}$ is an optimal strategy for the control problem.
\end{lemma}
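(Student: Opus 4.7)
The plan is to fix an arbitrary admissible strategy $\pi \in \Pi^K$ with controlled process $U^\pi$ and rate $\ell^\pi_t \in [0, K U^\pi_t]$, and show $v_\pi(x) \leq v_{\hat{\pi}}(x)$ for every $x > 0$; combined with $v_{\hat{\pi}}(x) \leq v_\ast(x)$ (which holds trivially since $\hat{\pi} \in \Pi^K$), this forces $v_{\hat{\pi}} = v_\ast$ and hence establishes optimality of $\hat{\pi}$. To that end, I would apply It\^o's formula to $e^{-qt} v_{\hat{\pi}}(U^\pi_t)$ up to the stopping time $\ruintime \wedge T_n$, where $T_n := n \wedge \inf\{t \geq 0 : U^\pi_t \geq n\}$ is a localizing sequence introduced so that the martingale term is a true martingale rather than merely a local one.

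The It\^o expansion produces the drift
\[
e^{-qt}\left[\frac{\sigma^2}{2} v_{\hat{\pi}}''(U^\pi_t) + (\mu - \ell^\pi_t) v_{\hat{\pi}}'(U^\pi_t) - q v_{\hat{\pi}}(U^\pi_t)\right] dt.
\]
Adding and subtracting $e^{-qt}\ell^\pi_t\, dt$ and invoking the HJB equation~\eqref{eq:HJB}---which, since $\ell^\pi_t$ lies in the admissible range $[0, K U^\pi_t]$, yields the pointwise inequality
\[
\frac{\sigma^2}{2} v_{\hat{\pi}}''(U^\pi_t) + (\mu - \ell^\pi_t) v_{\hat{\pi}}'(U^\pi_t) - q v_{\hat{\pi}}(U^\pi_t) + \ell^\pi_t \leq 0
\]
for almost every $t \in [0, \ruintime)$---I would conclude that
\[
M_t := e^{-qt} v_{\hat{\pi}}(U^\pi_t) + \int_0^t e^{-qs}\ell^\pi_s\, ds
\]
is a supermartingale on $[0, \ruintime \wedge T_n]$. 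Taking expectations at $\ruintime \wedge T_n$ and rearranging then gives
\[
\e_x\!\left[\int_0^{\ruintime \wedge T_n} e^{-qs}\ell^\pi_s\, ds\right] \leq v_{\hat{\pi}}(x) - \e_x\!\left[e^{-q(\ruintime \wedge T_n)} v_{\hat{\pi}}(U^\pi_{\ruintime \wedge T_n})\right].
\]

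I would then let $n \to \infty$: monotone convergence handles the left-hand side, converting it to $v_\pi(x)$, while on $\{\ruintime < \infty\}$ path-continuity and $v_{\hat{\pi}}(0) = 0$ take care of the boundary term. The main obstacle is to show the remaining boundary expectation vanishes as $n \to \infty$, which demands a growth estimate on $v_{\hat{\pi}}$ together with enough integrability of $U^\pi$. For the growth estimate, one uses that $L^\pi$ is non-decreasing and so $U^\pi_t \leq X_t$, giving $v_{\hat{\pi}}(x) \leq K\e_x[\int_0^\infty e^{-qt} (X_t)^+ dt]$, an upper bound that is at most linear in $x$ thanks to $q > 0$; combined with standard second-moment bounds for $U^\pi$ deduced from the SDE and the rate constraint $\ell^\pi_t \leq K U^\pi_t$, a routine dominated convergence argument closes the estimate. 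This yields $v_\pi(x) \leq v_{\hat{\pi}}(x)$ for all $\pi \in \Pi^K$, completing the verification.
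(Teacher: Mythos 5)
Your proposal is the standard verification argument that the paper itself merely invokes (``standard arguments\dots\ left to the reader''), and it is correct: It\^o's formula with localization, the pointwise HJB inequality for the admissible rate $\ell^\pi_t \in [0,KU^\pi_t]$, and a passage to the limit. One remark: since $v_{\hat{\pi}} \geq 0$, the boundary term $\e_x\bigl[\mathrm{e}^{-q(\sigma^\pi \wedge T_n)} v_{\hat{\pi}}(U^\pi_{\sigma^\pi \wedge T_n})\bigr]$ can simply be discarded from the inequality rather than shown to vanish, so the growth estimate and dominated convergence step are not actually needed for the one-sided comparison $v_\pi \leq v_{\hat{\pi}}$.
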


This lemma can be proved using standard arguments. The details are left to the reader.

We now provide a proof for Theorem~\ref{T:main}, the solution to the control problem. First, note that the Hamilton-Jacobi-Bellman (HJB) equation~\eqref{eq:HJB} is equivalent to
$$
\begin{cases}
\left( \frac{\sigma^2}{2} \right) v_{\hat{\pi}}^{\prime \prime}(x) + \mu v_{\hat{\pi}}^{\prime}(x) - q v_{\hat{\pi}} (x) = 0 & \text{if $v_{\hat{\pi}}^\prime(x) \geq 1$,}\\
\left( \frac{\sigma^2}{2} \right) v_{\hat{\pi}}^{\prime \prime}(x) + \mu v_{\hat{\pi}}^{\prime}(x) - q v_{\hat{\pi}} (x) + Kx \left(1-v_{\hat{\pi}}^\prime(x) \right) = 0 & \text{if $v_{\hat{\pi}}^\prime(x)<1$.}
\end{cases}
$$

Using the expression for $v_b$, obtained in Proposition~\ref{P:value-linear}, with the background material provided in Appendix~\ref{an:odes}, we have, for $0<x <b$,
$$
\left( \frac{\sigma^2}{2} \right) v_b^{\prime \prime}(x) + \mu v_b^{\prime}(x) - q v_b (x) = 0
$$
and, for $x>b$,
$$
\left( \frac{\sigma^2}{2} \right) v_b^{\prime \prime}(x) + \left(\mu - Kx \right) v_b^{\prime}(x) - q v_b (x) + Kx = 0 .
$$
Consequently, $v_{b^\ast}$ satisfies the HJB equation~\eqref{eq:HJB} if and only if
$$
\begin{cases}
v_{b^\ast}^\prime(x) \geq 1 & \text{for $0<x \leq b^\ast$,}\\
v_{b^\ast}^\prime(x) \leq 1 & \text{for $x > b^\ast$.}
\end{cases}
$$

Since $D_0=-(\mu/K)/\Hq(0)$, we can easily verify that $v_0^\prime (x) \leq 1$, for all $x>0$, if and only if
$$
- \frac{q^2}{\mu K} \Hq(0) \leq \Hqprime (x) .
$$
Recall that $\Hq$ is convex, so $\Hqprime$ is increasing. Thus, if $\mu K/q^2 \leq \Delta = - \Hq(0)/\Hqprime (0)$, then $\pi_0$ is optimal.

If $\mu K/q^2 > \Delta$, then, by Proposition~\ref{P:root}, the optimal level $b^\ast>0$ is given by~\eqref{eq:eqn-optimal-barrier} and thus we can show that
$$
C_{b^\ast} = \frac{(q+K)/K}{W^{(q) \prime} (b^\ast)} \qquad \text{and} \qquad D_{b^\ast} = \frac{q/K}{\Hqprime (b^\ast)} .
$$
Hence, by Proposition~\ref{P:value-linear}, the HJB equation is equivalent to
$$
\begin{cases}
W^{(q) \prime}(x) \geq \frac{(q+K)/K}{C_{b^\ast}}=W^{(q) \prime}(b^\ast) & \text{for $0<x \leq b^\ast$,}\\
\Hqprime (x) \geq \frac{q/K}{D_{b^\ast}}=\Hqprime (b^\ast) & \text{for $x > b^\ast$.}
\end{cases}
$$

The two inequalities are verified because $W^{(q) \prime}$ decreases on $(0,c^\ast)$ and $b^\ast<c^\ast$, and because $\Hq$ is a convex function.




\section{Discussion and numerical illustrations}\label{S:discussion}

For practical reasons such as solvency purposes, the company and shareholders might prefer to have $\mu > K b$, so that for some time, i.e., from the up-crossing of level $b$ until the process reaches level $\mu/K$, dividend payments do not cancel out all of the capital's growth. Indeed, we see that, below level $b$, the process $U^b$ behaves like a Brownian motion with constant (positive) drift $\mu$, and above level $b$, it behaves like an Ornstein-Uhlenbeck process with mean-reverting level $\mu/K$.

Therefore, let us now investigate the relationship between levels $b^\ast$ and $\mu/K$. First, it is known that $c^\ast < \mu/q$; see, e.g., \cite{gerber-shiu_2004}. If $K$ is \textit{relatively small}, i.e., if
$$
c^\ast < \frac{\mu}{q} \wedge \frac{\mu}{K} ,
$$
then the optimal barrier level $b^\ast$ is less than $\mu/K$. If $K$ is \textit{relatively large}, i.e., if
$$
\frac{\mu}{K} < c^\ast < \frac{\mu}{q} ,
$$
then, from Equation~\eqref{eq:optimal-barrier-2}, we deduce that $b^\ast < \mu/K$ if and only if
$$
\left(K+q \right) \left[ \frac{W^{(q)} (\mu/K)}{W^{(q) \prime} (\mu/K)} - \frac{\mu}{q} \right] > q \frac{\Hq (\mu/K)}{\Hqprime (\mu/K)} .
$$

\begin{remark}
Note that, if $\mu=0.3$ $\sigma = 4.5$, $q=0.05$ and $K=0.35$, then we do have that $\frac{\mu}{K} < b^\ast < c^\ast < \frac{\mu}{q}$.
\end{remark}


Now, let us further illustrate our main results. First, in Figure~\ref{fig:line}, we draw the value function for a delayed linear control strategy, as given in Proposition~\ref{P:value-linear}, as a function of the barrier level $b$, for two sets of parameters. We see that the optimal barrier level, given by the solution to Equation~\eqref{eq:eqn-optimal-barrier}, does correspond with the maximum of the function, in both cases. In the top panel, the parameters are such that $\mu K/q^2 > \Delta$, which is the condition in Proposition~\ref{P:root}, for the existence (and uniqueness) of a positive optimal barrier level $b^\ast>0$. In the bottom panel, parameters are such that $\mu K/q^2 < \Delta$ and we observe that the maximum is indeed attained at zero.

\begin{figure}[htbp]
\hbox{\hspace{-2cm}\includegraphics[scale=0.4]{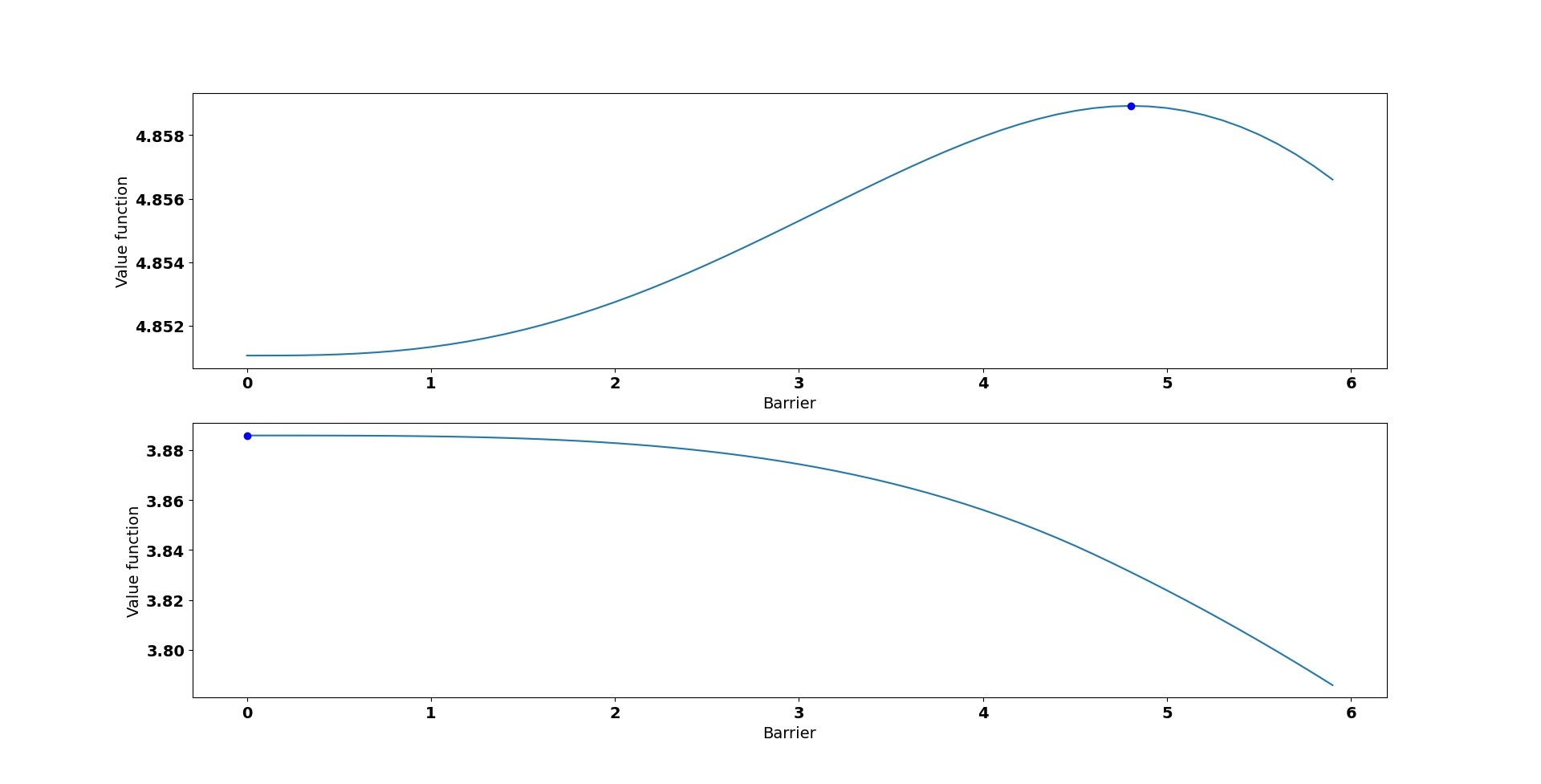}}
\caption{Value function of $\pi_b$ as a function of $b$. The dot indicates the value function at $b^\ast$ given by Equation~\eqref{eq:eqn-optimal-barrier}. Top panel: $\mu = 0.3$, $\sigma=4.5$, $K=0.1$, $q=0.025$ and $U_0 = 4.60$. Bottom panel: $\mu=0.3$, $\sigma=4.5$, $K=0.1$, $q=0.05$ and $U_0 = 4.60$.}\label{fig:line}
\end{figure}

Second, in Figure~\ref{fig:surface}, we draw the value function for a (delayed) linear control strategy as a function of two variables: the barrier level $b$ and the parameter $K$. The curve on the surface identifies the value function corresonding to the optimal barrier $b^\ast$. For small values of $K$, $\mu K/q^2 < \Delta$ and we see that the optimal barrier is $b^\ast=0$ while for values of $K$ such that $\mu K/q^2 > \Delta$ we see that $b^\ast>0$.  

\begin{figure}[htbp]
\hbox{\hspace{-4.5cm}\includegraphics[scale=0.47]{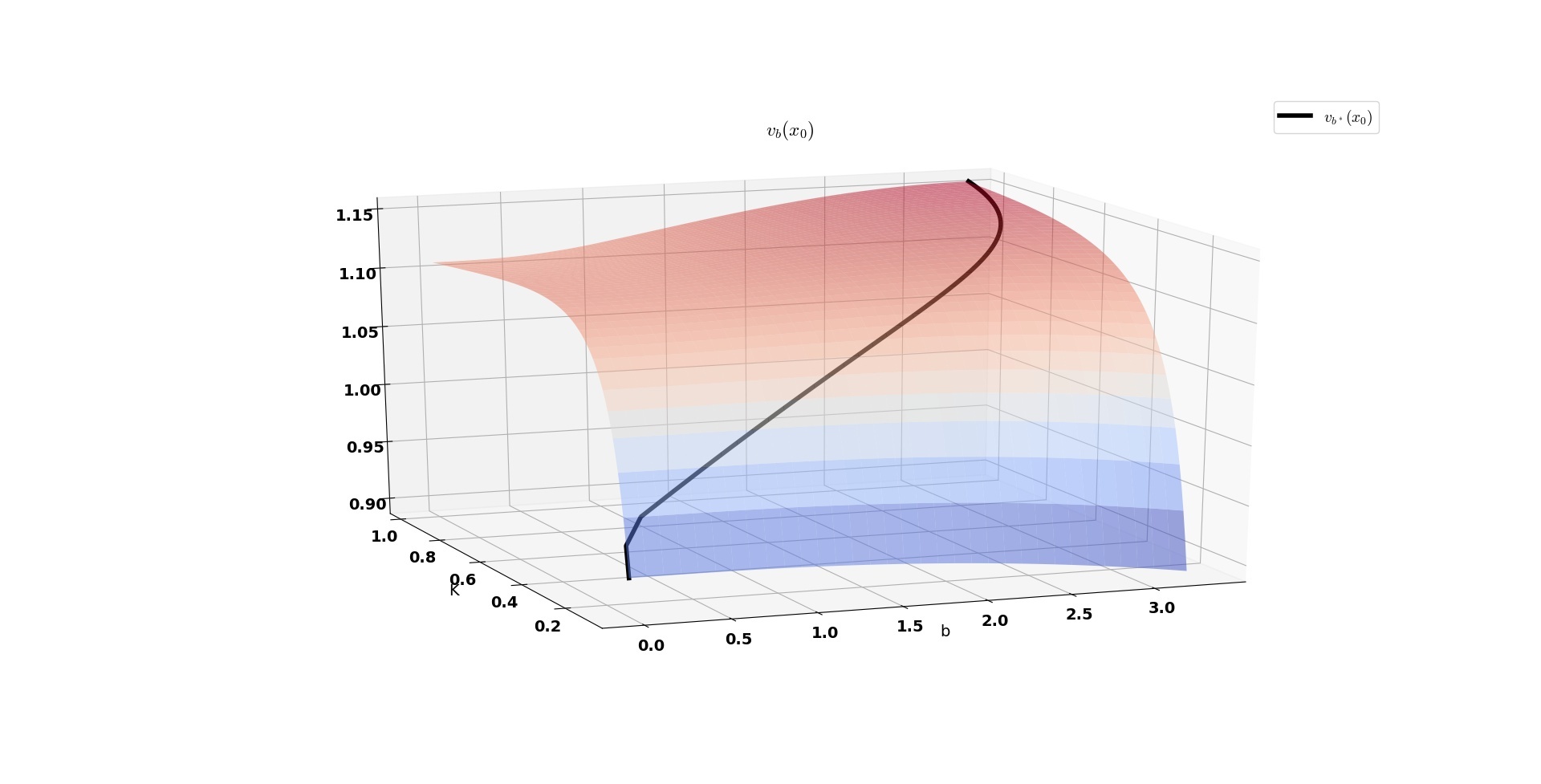}}
\caption{Value function of $\pi_b$ as a function of the variables $b$ and $K$. The curve is the value function of $\pi_{b^\ast}$. Parameters: $\mu=0.3$, $\sigma=2.5$, $q=0.07$ and $x_0 = 1$.}\label{fig:surface}
\end{figure}

Finally, as alluded to in the introduction, when $K$ goes to infinity, one expects to recover de Finetti's classical control problem, in which the optimal strategy is to pay out all surplus in excess of the barrier level $c^\ast$, as given by~\eqref{eq:classical-optimal-level}. Recall from Proposition~\ref{P:root} that the optimal level $b^\ast$ in our problem is always less than $c^\ast$. In Figure~\ref{fig:bstar_K}, we draw the value of the optimal barrier level $b^\ast$ as a function of $K$, i.e., $K \mapsto b^\ast (K)$. One can see that, for this set of parameters, the optimal barrier level $b^\ast(K)$ increases to $c^\ast$ as $K$ increases to infinity.

\begin{figure}[htbp]
\hbox{\hspace{-1.5cm}\includegraphics[scale=0.37]{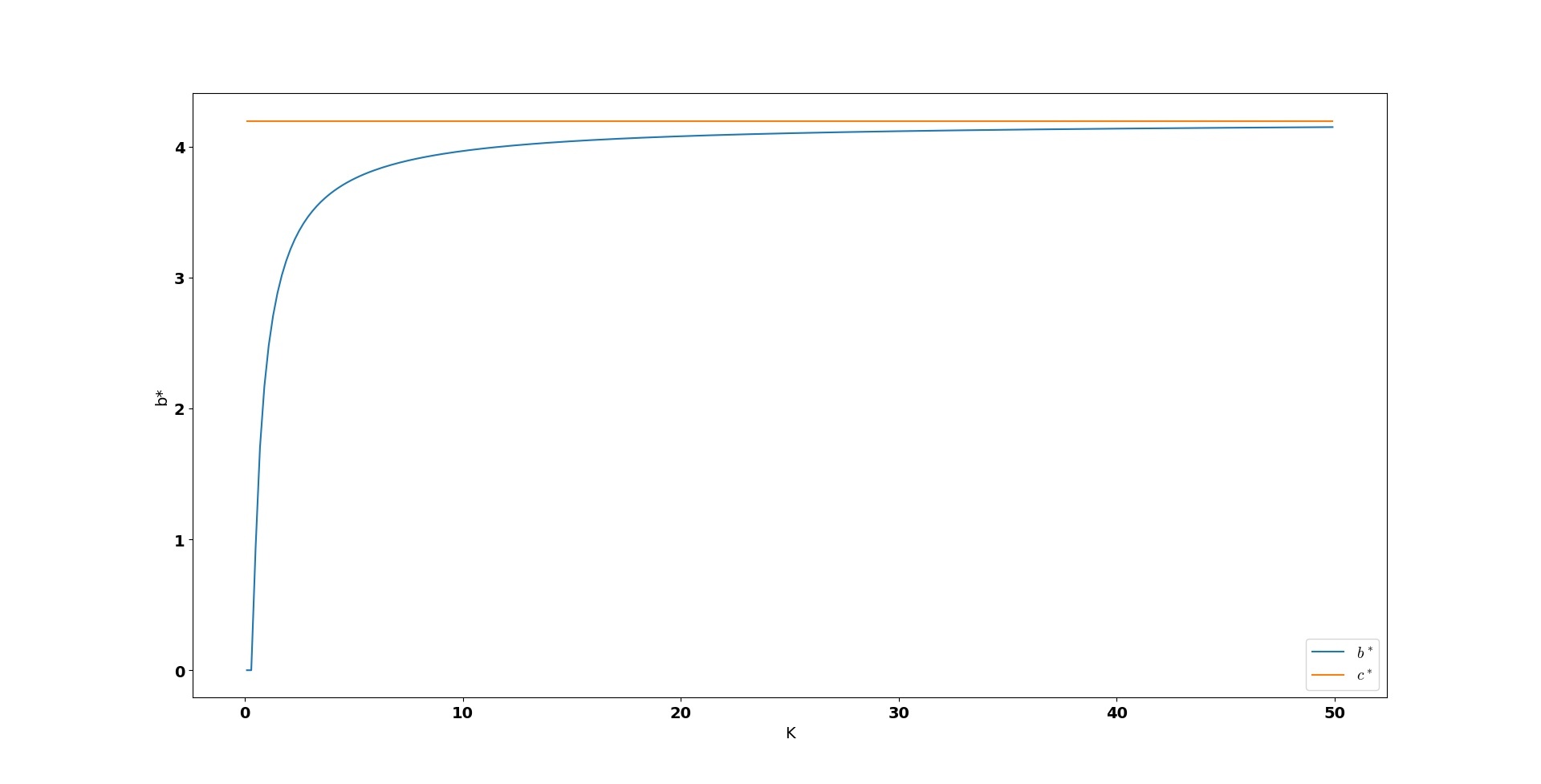}}
\caption{Optimal barrier level $b^\ast$ as a function of $K$. The red line corresponds to $c^\ast$. Parameters: $\mu=0.3$, $\sigma=4.5$, $q=0.07$ and $U_0 = 1$.}\label{fig:bstar_K}
\end{figure}

%

%
%
\bibliographystyle{abbrv}
\bibliography{references_affine-strategies}
%
%

\appendix

\section{Existence of the refracted diffusion process given by~\eqref{eq:main-sde}}\label{an:existence_proof}

It is well known, thanks to Proposition 3.6 in Section 5.3 of \cite{karatzas-shreve_1991}, that Equation~\eqref{eq:main-sde} has a weak solution. However, we can prove the existence of a strong solution by mimicking the proof of Lemma 12 in \cite{Kyprianou/Loeffen_2010}.

Fix an arbitrary $T>0$. For each $n\geq 1$, let $\{T^n_i \colon i=0,\dots, n\}$ be a sequence of partitions such that $T^n_0 = 0 <T^n_1<\cdots <T^n_n = T$ and $\lim_{n\to \infty} \max_{1\leq i \leq n} |T^n_i - T^n_{i-1}|=0$. For $t \in [0,T]$, let $X_t = \mu t + \sigma W_t$ and, for each $n$, define $X^n_t = \mu t + \sigma W_{T^n_i}$, if $T^n_i \leq t < T^n_{i+1}$, for $i=0,1,\dots, n-1$. One can show that the sequence of processes $\{X^n, n=1,2,\dots\}$ converges strongly to $X$, i.e.,
\[
\lim_{n \to \infty} \sup_{t \in [0,T]}|X^n_t - X_t| = 0
\]
almost surely. Now, define the sequence of processes $\{U^n, n=1,2,\dots\}$ by: for each $t \in [0,T]$, set
\[
U^n_t = X^n_t - K \int_0^t U^n_s \ind_{\{ U^n_s>b \}} \mathrm ds .
\]
Clearly, for each $n \geq 1$, $U^n$ is a well-defined process. Let us show that $\{U^n, n=1,2,\dots\}$ is a strong Cauchy sequence and thus converges to a process $U$ such that
\[
U_t = X_t - K \int_0^t U_s \ind_{\{ U_s>b \}} \mathrm ds .
\]
Define, for all for $t \in [0,T]$, $\Delta^{n,m} X_t := X^n_t - X^m_t$, $\Delta^{n,m} U_t := U^n_t - U^m_t$ and
$$
A^{n,m}_t := \Delta^{n,m} U_t - \Delta^{n,m} X_t = - K \int_0^t \left( U^n_s \ind_{\{ U^n_s>b \}} - U^m_s \ind_{\{ U^m_s>b \}} \right) \mathrm ds .
$$

Fix $\epsilon >0$. As $\{X^n, n=1,2\dots\}$ converges strongly to $X$, there exists an integer $N_\epsilon$ such that: if $n,m > N_\epsilon$, then $\sup_{t \in [0,T]}|\Delta^{n,m} X_t|<\epsilon$. Let us show by contradiction that, for any $n,m > N_\epsilon$, we have $\sup_{t \in [0,T]} |A^{n,m}_t| < \epsilon$. Assume it is not verified. Then, from the continuity of $t \mapsto A^{n,m}_t$ and since $A^{n,m}_0=0$, there exists $s \in [0,T]$ such that $|A^{n,m}_s| = \epsilon$ and such that, for any sufficiently small $\delta >0$, there exists $r_s \in [s,s + \delta)$ such that $|A^{n,m}_{r_s}| > \epsilon$. Assume that $A^{n,m}_s = \epsilon < A^{n,m}_{r_s}$.

Since, for all $t \in [0,T]$, $\Delta^{n,m} X_t \in (-\epsilon, \epsilon)$ and $A^{n,m}_t = \Delta^{n,m}U_t - \Delta^{n,m}X_t$, then $\Delta^{n,m}U_s>0$ and there exists $\delta>0$ such that $\Delta^{n,m}U_r>0$ for all $r \in [s,s+\delta)$. Consequently, for each $r \in [s,s+\delta)$,
\[
A^{n,m}_r - A^{n,m}_s = - K \int_s^r \left( U^n_v \ind_{\{ U^m_v>b \}} - U^m_v \ind_{\{ U^m_v>b \}} \right) \mathrm dv  < 0 .
\]
This is a contradiction. A similar argument can be used for the case $A^{n,m}_s = -\epsilon$.

In conclusion, for any $n,m > N_\epsilon$, we have $\sup_{t \in [0,T]} |A^{n,m}_t| < \epsilon$ and thus, by the triangle inequality, we have $\sup_{t \in [0,T]} |\Delta^{n,m} U_t| < 2\epsilon$.

\section{Differential equations and analytical properties}\label{an:odes}


It is well known (and easy to verify) that $W^{(q)}$, defined in~\eqref{eq:q-scale-function}, is a solution to the following ordinary differential equation (ODE):
$$
\frac{\sigma^2}{2} f^{\prime \prime}(x) + \mu f^{\prime}(x) - q f (x) = 0 , \quad x > 0 .
$$
Hence, for $x>0$, we have
$$
W^{(q) \prime \prime}(x) = \frac{2}{\sigma^2} \left( q W^{(q)} (x) - \mu W^{(q) \prime} (x) \right) .
$$

We are also interested in the following non-homogeneous ODE:
\begin{equation}\label{eq:non-homogeneous}
\frac{\sigma^2}{2} f^{\prime \prime}(x) + \left(\mu-Kx \right) f^{\prime}(x) - q f (x) + Kx = 0 , \quad x > 0 .
\end{equation}
We are looking for a solution of the form $f(x)=f_p(x)+f_h(x)$, where $f_p$ is a particular solution and where $f_h$ is a solution of the homogeneous version of~\eqref{eq:non-homogeneous}, that is
\begin{equation}\label{eq:homogeneous}
\frac{\sigma^2}{2} f^{\prime \prime}(x) + \left(\mu-Kx \right) f^{\prime}(x) - q f (x) = 0 , \quad x > 0 .
\end{equation}
On one hand, it is easy to verify that
$$
f_p(x) = \frac{K}{q+K} \left( x + \frac{\mu}{q} \right)
$$
is a solution to the ODE in~\eqref{eq:non-homogeneous}. On the other hand, from \cite{borodin-salminen_2002}, it is known that, for $\lambda>0$,
$$
b^2 f^{\prime \prime}(x) - x f^{\prime}(x) - \lambda f (x) = 0 , \quad x \in \reals ,
$$
admits
$$
\psi_\lambda (x) = \mathrm{e}^{x^2/4b^2} \mathrm{D}_{-\lambda} \left( -x/b \right) \quad \text{and} \quad \varphi_\lambda (x) = \mathrm{e}^{x^2/4b^2} \mathrm{D}_{-\lambda} \left( x/b \right) ,
$$
as its increasing solution and its decreasing solution, respectively, where $\mathrm{D}_{-\lambda}$ is the parabolic cylinder function defined by: for $x \in \reals$,
$$
\mathrm{D}_{-\lambda}(x) = \frac{1}{\Gamma(\lambda)} \mathrm{e}^{-x^2/4} \int_0^\infty t^{\lambda-1} \mathrm{e}^{-xt-t^2/2} \mathrm{d}t .
$$

Since $H_K^{(q)}(x)=\varphi_{q/K} \left( \left(x-\mu/K\right) \sqrt{2K} \right)$, as defined in~\eqref{eq:H-scale-function}, we deduce that $H_K^{(q)}$ is a solution to the homogeneous ODE in~\eqref{eq:homogeneous}. Hence, for $x>0$, we have
$$
H_K^{(q) \prime \prime}(x) = \frac{2}{\sigma^2} \left( \left(Kx - \mu \right) H_K^{(q) \prime}(x) + q H_K^{(q)}(x) \right) .
$$
Finally, we have that
$$
f(x) = \frac{K}{q+K} \left( x + \frac{\mu}{q} \right) + D_b H_K^{(q)}(x)
$$
is a solution to the non-homogeneous ODE in~\eqref{eq:non-homogeneous}.

\end{document}